\numberwithin{equation}{section}
\date{}
\newtheorem{theorem}{Theorem}[section]
\newtheorem{lemma}[theorem]{Lemma}
\newtheorem{remark}[theorem]{Remark}
\newtheorem{question}[theorem]{Question}
\newtheorem{open problem}[theorem]{Open problem}
\providecommand{\keywords}[1]{\textbf{\textit{Key words---}} #1}
\begin{document}

\markboth{WonTae Hwang, Kyunghwan Song}
{A reciprocal sum related to the Riemann zeta function at $s=6$}

\title{A reciprocal sum related to the Riemann zeta function at $s=6$}

\author{WonTae Hwang$^{1}$, Kyunghwan Song$^{2,*}$\\
${}^{1}$Department of Mathematics, Indiana University Bloomington, 831 E. Third St.\\ Bloomington, IN 47405, USA\\
${}^{2}$Department of Mathematics, Korea University, Seoul 02841, Republic of Korea\\
${}^{*}${Corresponding Author Email: heroesof@korea.ac.kr}
}

\maketitle

\begin{abstract}
We introduce an explicit formula for a reciprocal sum related to the Riemann zeta function at $s=6,$ and pose one question related to a computational formula for larger values of $s.$

\keywords{Reciprocal sum; Riemann zeta function.}

{Mathematics Subject Classification 2010: 11M06, 11B83}
\end{abstract}

\section{Introduction}
Among various kind of zeta functions in mathematics, one of the most famous and important zeta function is the Riemann zeta function. One of the Millennium Prize Problems is the Riemann Hypothesis that is related to the non-trivial zeros of the Riemann zeta function on the critical line. For $s=\sigma + i t \in \mathbb{C}$ with $\sigma>1,$ the Riemann zeta function is defined as the absolutely convergent infinite series
\begin{equation*}
\zeta(s)=\sum_{n=1}^{\infty} \frac{1}{n^s}.
\end{equation*}
It is well-known that this function admits an analytic continuation to the whole complex plane $\mathbb{C}$,  has an Euler product formula, and satisfies a functional equation. If we restrict our attention to a small positive integer $s >1,$ then we have the following list of values of the Riemann zeta function \cite{Wol}:
\begin{equation*}
\zeta(2)=\frac{\pi^2}{6},~\zeta(3)=1.2020569032\cdots, ~\zeta(4)=\frac{\pi^4}{90},~\zeta(5)=1.0369277551\cdots,~\zeta(6)=\frac{\pi^6}{945}.
\end{equation*}
Recently, Lin\cite{Lin (2016)} initiated the study of a reciprocal sum related to $\zeta(2)$ and $\zeta(3)$, and proved the following two equalities: for any positive integer $n,$ we have
\begin{equation*}
\left[\left(\sum_{k=n}^{\infty} \frac{1}{k^2}\right)^{-1}\right]=n-1
\end{equation*}
and
\begin{equation*}
\left[\left(\sum_{k=n}^{\infty} \frac{1}{k^3}\right)^{-1}\right]=2n(n-1)
\end{equation*}
where $[x]$ denotes the greatest integer that is less than or equal to $x$. One basic observation from this result is that both $n-1$ and $2n(n-1)$ are polynomials in the variable $n.$ Lin\cite{Lin (2016)} also proposed a natural problem of determining the existence of an explicit computational formula for $\displaystyle \left[\left(\sum_{k=n}^{\infty} \frac{1}{k^s}\right)^{-1}\right]$ for an integer $s \geq 4.$ In an attempt to solve this problem, Lin and Li\cite{Lin (2017)} came up with a computational formula for the case $s=4,$ namely, they proved that
\begin{equation*}
\left[\left(\sum_{k=n}^{\infty} \frac{1}{k^4}\right)^{-1}\right]= \begin{cases} 24m^3 -18m^2 + \left[\frac{3(5m-1)}{2}\right], ~&\mbox{if}~n=2m; \\
24m^3 -54m^2 +\left[\frac{3(58m-17)}{4}\right], ~&\mbox{if}~n=2m-1.
\end{cases}
\end{equation*}
We note that their formula depends on the parity of $n.$ Along this line, Xu\cite{Xu (2016)} also proved two computational formulas related to the Riemann zeta function at $s=4, 5$, using a slightly different method from that of \cite{Lin (2017)}.  \\

In this paper, we consider the problem for the case $s=6$ and get an explicit formula for $\displaystyle \left[\left(\sum_{k=n}^{\infty} \frac{1}{k^6}\right)^{-1}\right]$, which depends on the residue of $n$ modulo $48.$ In proving some of our results, we use a different method from that of \cite{Lin (2017)} and \cite{Xu (2016)}, and we hope this method might be applied to larger values of $s$ so that the problem of Lin can be solved completely. In the Appendix, we provide another proof of the formula that is obtained for the case $s=5.$

\section{Main Result}
In this section, we prove our main result. We first introduce one notation: for an integer $n,$ let $n_{48}$ be the remainder when $n$ is divided by $48.$ Then we have the following
\begin{theorem}\label{main thm}
For each integer $n \geq 829,$ we put $\displaystyle f(n)=\left[ \left(\sum_{k=n}^{\infty}\frac{1}{k^6} \right)^{-1} \right]$. Then we have
\begin{equation*}
  f(n)=\begin{cases} 5n^5 - \frac{25}{2} n^4 +\frac{75}{4} n^3 -\frac{125}{8} n^2 +\frac{185}{48}n - \frac{5 n_{48}}{48} - \left[\frac{35-5 n_{48}}{48}\right], ~&~\mbox{if~$n$~is even} ; \\ 5n^5 - \frac{25}{2} n^4 +\frac{75}{4} n^3 -\frac{125}{8} n^2 +\frac{185}{48}n - \frac{5 n_{48}+18}{48} - \left[\frac{17-5 n_{48}}{48}\right], & ~\mbox{if~$n$~is odd}. \end{cases}
\end{equation*}
\end{theorem}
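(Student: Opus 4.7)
My approach is to bracket the tail $S(n):=\sum_{k=n}^{\infty}k^{-6}$ by two explicit rational functions, invert the inequalities, and then read off the integer part from the fractional contribution of a fixed polynomial modulo $48$. Put
\begin{equation*}
p(n):=5n^5-\tfrac{25}{2}n^4+\tfrac{75}{4}n^3-\tfrac{125}{8}n^2+\tfrac{185}{48}n;
\end{equation*}
this is the main term predicted by the Euler--Maclaurin expansion of $1/S(n)$, and the theorem amounts to showing that $f(n)$ coincides with $\lfloor p(n)\rfloor$ up to the explicit constant shift in $n_{48}$ and the parity of $n$.

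\textbf{Telescoping bounds.} I would first seek rational functions $R_\pm(x)$ of degree $5$ such that the quantity $R_\pm(n)-R_\pm(n+1)-n^{-6}$ has a fixed sign for every $n\ge 829$; summation from $n$ to infinity then telescopes to $S(n)\lessgtr R_\pm(n)$, hence $R_-(n)<1/S(n)<R_+(n)$. Natural candidates are perturbations of $\tfrac{1}{5(x-1/2)^5}$ by a few lower-degree rational corrections, with coefficients chosen so that the Laurent expansion of $R_\pm(x)$ at infinity matches that of $1/S(x)$ to high order (the corrections being forced by the Bernoulli numbers appearing in Euler--Maclaurin). Inverting and expanding in powers of $1/n$ yields
\begin{equation*}
p(n)-\varepsilon_-(n)\;<\;1/S(n)\;<\;p(n)+\varepsilon_+(n),
\end{equation*}
where both error terms $\varepsilon_\pm(n)$ tend to $0$.

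\textbf{Fractional-part analysis.} Every non-integer coefficient of $p(n)$ has denominator dividing $48$, so $p(n)-p(n_{48})$ is an integer and the fractional part $\{p(n)\}$ depends only on $n\bmod 48$. Moreover, the parities of $n^2$ and $n^4$ change the denominators actually appearing in $p(n)$, which is why the statement must branch into an ``$n$ even'' and an ``$n$ odd'' case. I would enumerate the $24$ even and $24$ odd residues modulo $48$, compute $\{p(n)\}$ in each one, and verify by direct inspection that the closed-form correction terms $\tfrac{5 n_{48}}{48}+\bigl[\tfrac{35-5n_{48}}{48}\bigr]$ and $\tfrac{5n_{48}+18}{48}+\bigl[\tfrac{17-5n_{48}}{48}\bigr]$ reproduce exactly the integer-shift needed to pass from $p(n)$ to $\lfloor p(n)\rfloor$ in the two parity classes.

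\textbf{Main obstacle.} The delicate point is the first step: the errors $\varepsilon_\pm(n)$ must be strictly smaller than the minimum distance from $p(n)$ to $\mathbb{Z}$ over the relevant residues modulo $48$, or else $\lfloor 1/S(n)\rfloor$ can be off by one. Because $185/48$ is not a convenient fraction, certain residues $n_{48}$ push $\{p(n)\}$ very close to $0$ or to $1$; it is precisely these close calls that dictate the hypothesis $n\ge 829$. Making the next-order Euler--Maclaurin remainder both sign-definite and small enough, uniformly in the residue class, is where the bulk of the estimation effort will go, and I expect it to reduce to a handful of elementary polynomial inequalities valid for $n\ge 829$.
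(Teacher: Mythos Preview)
Your plan has the right architecture but contains a concrete error that would derail the fractional-part step. You assert that after inverting the telescoping bounds one obtains
\[
p(n)-\varepsilon_-(n)\;<\;\frac{1}{S(n)}\;<\;p(n)+\varepsilon_+(n),\qquad \varepsilon_\pm(n)\to 0,
\]
i.e.\ that $1/S(n)-p(n)\to 0$. This is false: carrying the Euler--Maclaurin expansion one step further than you did gives
\[
\frac{1}{S(n)}=p(n)+\frac{25}{96}+\frac{1625}{192\,n}+O\!\left(\frac{1}{n^2}\right),
\]
so $1/S(n)-p(n)\to 25/96\neq 0$. Consequently $f(n)\neq\lfloor p(n)\rfloor$ in general; for instance when $n_{48}=46$ one has $\{p(n)\}=38/48$, yet the theorem's formula gives $f(n)=p(n)+10/48=\lfloor p(n)\rfloor+1$. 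Your ``fractional-part analysis'' paragraph, which aims to check that the correction terms ``reproduce exactly the integer-shift needed to pass from $p(n)$ to $\lfloor p(n)\rfloor$'', would therefore not match the stated formula. The repair is to carry the constant $25/96$ along: the correct target is $f(n)=\lfloor p(n)+25/96\rfloor$, and since $48\,p(n)\in\mathbb{Z}$ the number $p(n)+25/96$ always lies at distance at least $1/96$ from $\mathbb{Z}$, which is what ultimately forces a threshold near $n\ge 829$ (note $1625/(192n)<1/96\iff n>812.5$).

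The paper sidesteps this missing-constant issue entirely by organising the telescoping differently. Rather than bounding $1/S(n)$ near the \emph{rational} polynomial $p(n)$ and then analysing fractional parts, it fixes a residue $n\equiv b\pmod{48}$, writes $n=48m+b$, takes the \emph{integer} polynomial $p_b(m)$ equal to the right-hand side of the theorem, and telescopes over blocks of $48$ consecutive terms, showing directly that
\[
\frac{1}{p_b(k)+c}-\frac{1}{p_b(k+1)+c}\;<\;\sum_{i=1}^{48}\frac{1}{(48k+b+i-1)^6}\;<\;\frac{1}{p_b(k)}-\frac{1}{p_b(k+1)}
\]
for a fixed $c<1$. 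Summing yields $p_b(m)<1/S(n)<p_b(m)+c$, and the floor is immediate because $p_b(m)\in\mathbb{Z}$. Grouping by $48$ bakes the periodicity into the telescoping itself, so no separate asymptotic constant or fractional-part bookkeeping is needed.
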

\begin{proof}
  We need to consider 48 cases according to the residue of $n$ modulo $48.$ We just give the proof of two representing cases because similar arguments can be applied to other 46 cases. In the sequel, let $c=0.999999999999999999.$ \\
  
  (i) $n \equiv 1 ~(\textrm{mod}~48)$: suppose that $n=48m+1$ for some integer $m \geq 18.$ For integers $k \geq 18$, let $p(k)=1274019840k^5 +66355200k^4 +2073600k^3 +36000k^2 +185k -1$, and let
  \begin{equation*}
    g(k)=\left(\sum_{i=1}^{48}\frac{1}{(48k+i)^6} \right) - \frac{1}{p(k)}+\frac{1}{p(k+1)},
  \end{equation*} 
  and
  \begin{equation*}
    h(k)=\left(\sum_{i=1}^{48}\frac{1}{(48k+i)^6} \right)- \frac{1}{p(k)+c}+\frac{1}{p(k+1)+c}.
  \end{equation*}
  Then by a direct computation, we can see that $g(18)<0, g(k)<g(k+1)$ for all $k \geq 18$, and $\displaystyle \lim_{k \rightarrow \infty} g(k)=0$, and hence, it follows that we have $g(k)<0$ for all $k \geq 18.$ Similarly, we also can see that $h(18)>0, h(k+1)<h(k)$ for all $k \geq 18$, and $\displaystyle \lim_{k \rightarrow \infty} h(k)=0$ so that $h(k)>0$ for all $k \geq 18.$ Therefore, we get 
  \begin{equation*}
  \frac{1}{p(k)+c}-\frac{1}{p(k+1)+c} < \sum_{i=1}^{48}\frac{1}{(48k+i)^6}  < \frac{1}{p(k)}-\frac{1}{p(k+1)}
  \end{equation*}
  for all $k \geq m.$ Hence, by summing up, we get
\begin{equation*}
\sum_{k=m}^{\infty} \left( \frac{1}{p(k)+c}-\frac{1}{p(k+1)+c} \right) < \sum_{k=m}^{\infty} \left( \sum_{i=1}^{48}\frac{1}{(48k+i)^6} \right) < \sum_{k=m}^{\infty} \left( \frac{1}{p(k)}-\frac{1}{p(k+1)} \right)
\end{equation*}
which, in turn, gives
\begin{equation*}
\frac{1}{p(m)+c}< \sum_{k=48m+1}^{\infty}\frac{1}{k^6}< \frac{1}{p(m)},
\end{equation*}
or equivalently,
\begin{equation*}
p(m)<\left(\sum_{k=48m+1}^{\infty}\frac{1}{k^6} \right)^{-1} < p(m)+c.
\end{equation*}
Now, since $p(k)$ is a polynomial with integer coefficients in $k$ so that $p(m)$ is an integer, it follows that
\begin{equation*}
f(48m+1)=\left[ \left(\sum_{k=48m+1}^{\infty}\frac{1}{k^6} \right)^{-1} \right] = p(m).
\end{equation*}
It is easy to see that this is consistent with the formula given above. \\

(ii) $n \equiv 2 ~(\textrm{mod}~48)$: suppose that $n=48m+2$ for some integer $m \geq 18.$ For integers $k \geq 18$, let $p(k)=1274019840k^5 +199065600 k^4 +13132800 k^3 +453600 k^2 +7985 k +55$, and let
  \begin{equation*}
    g(k)=\left(\sum_{i=1}^{48}\frac{1}{(48k+i+1)^6} \right) - \frac{1}{p(k)}+\frac{1}{p(k+1)},
  \end{equation*}
  and
  \begin{equation*}
    h(k)=\left(\sum_{i=1}^{48}\frac{1}{(48k+i+1)^6} \right)- \frac{1}{p(k)+c}+\frac{1}{p(k+1)+c}.
  \end{equation*}
  Then by a direct computation, we can see that $g(18)<0, g(k)<g(k+1)$ for all $k \geq 18$, and $\displaystyle \lim_{k \rightarrow \infty} g(k)=0$, and hence, it follows that we have $g(k)<0$ for all $k \geq 18.$ Similarly, we also can see that $h(18)>0, h(k+1)<h(k)$ for all $k \geq 18$, and $\displaystyle \lim_{k \rightarrow \infty} h(k)=0$ so that $h(k)>0$ for all $k \geq 18.$ Therefore, we get
  \begin{equation*}
  \frac{1}{p(k)+c}-\frac{1}{p(k+1)+c} < \sum_{i=1}^{48}\frac{1}{(48k+i+1)^6}  < \frac{1}{p(k)}-\frac{1}{p(k+1)}
  \end{equation*}
  for all $k \geq m.$ Hence, by summing up, we get
\begin{equation*}
\sum_{k=m}^{\infty} \left( \frac{1}{p(k)+c}-\frac{1}{p(k+1)+c} \right) < \sum_{k=m}^{\infty} \left( \sum_{i=1}^{48}\frac{1}{(48k+i+1)^6} \right) < \sum_{k=m}^{\infty} \left( \frac{1}{p(k)}-\frac{1}{p(k+1)} \right)
\end{equation*}
which, in turn, gives
\begin{equation*}
\frac{1}{p(m)+c}< \sum_{k=48m+2}^{\infty}\frac{1}{k^6}< \frac{1}{p(m)},
\end{equation*}
or equivalently,
\begin{equation*}
p(m)<\left(\sum_{k=48m+2}^{\infty}\frac{1}{k^6} \right)^{-1} < p(m)+c.
\end{equation*}
Now, since $p(k)$ is a polynomial with integer coefficients in $k$ so that $p(m)$ is an integer, it follows that
\begin{equation*}
f(48m+2)=\left[ \left(\sum_{k=48m+2}^{\infty}\frac{1}{k^6} \right)^{-1} \right] = p(m).
\end{equation*}
It is easy to see that this is consistent with the formula given above. \\

By applying a similar argument to other cases, we can prove the theorem.
\end{proof}
According to the proof of our theorem, we note that $f(n)$ can be expressed as a polynomial in $m$ with integer coefficients when $n=48m+b$ with $0 \leq b <48.$ \\

We conclude this section by posing one expectation:
\begin{question}\label{main ques}
Let $s \geq 7$ be an integer. Does $\displaystyle \left[ \left(\sum_{k=n}^{\infty}\frac{1}{k^s} \right)^{-1} \right]$ depend on the residue of $n$ modulo a multiple of $s-2$? If so, (for all but finitely many integers $n,$) is $\displaystyle \left[ \left(\sum_{k=n}^{\infty}\frac{1}{k^s} \right)^{-1} \right]$ a polynomial in $m$ with integer coefficients when $n=l(s-2)m+b$ with $0 \leq b < l(s-2)$ for some positive integer $l$?
\end{question}
As motivating examples, we had $s=6, l=12$ in our main result, and $s=5, l=1$ in Theorem \ref{thm_5} of the Appendix.

\begin{remark}
We note that a positive answer for Question \ref{main ques} allows us to give one possible answer for the problem of Lin. 
\end{remark}

\section{Appendix}
In this section, we give another proof of the explicit formula related to the Riemann zeta function at $s=5$, which was independently obtained by Xu\cite{Xu (2016)} and the authors. We believe that our method of proof is essentially different from that of \cite{Xu (2016)}.  First, we list three preliminary lemmas that will be used to prove the formula. We start with the following

\begin{lemma}\label{lem 1}
For any positive integer $k,$ we put $p(k)=324k^4 -216k^3 +84k^2 -16k-1.$ Then we have
\begin{equation*}
\frac{1}{p(k)+0.9}-\frac{1}{p(k+1)+0.9}< \frac{1}{(3k)^5}+\frac{1}{(3k+1)^5}+\frac{1}{(3k+2)^5}<\frac{1}{p(k)}-\frac{1}{p(k+1)}
\end{equation*}
for any $k \geq 2.$
\end{lemma}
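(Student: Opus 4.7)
The plan is to adapt the telescoping-monotonicity argument used in the proof of Theorem~\ref{main thm}. Define
\begin{equation*}
g(k) = \frac{1}{(3k)^5} + \frac{1}{(3k+1)^5} + \frac{1}{(3k+2)^5} - \frac{1}{p(k)} + \frac{1}{p(k+1)}
\end{equation*}
and
\begin{equation*}
h(k) = \frac{1}{(3k)^5} + \frac{1}{(3k+1)^5} + \frac{1}{(3k+2)^5} - \frac{1}{p(k) + 0.9} + \frac{1}{p(k+1) + 0.9}.
\end{equation*}
The two inequalities of the lemma are equivalent to $g(k) < 0$ and $h(k) > 0$ for all $k \geq 2$, so it suffices to establish these.

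I would then prove three facts about each of $g$ and $h$. First, $\lim_{k \to \infty} g(k) = \lim_{k \to \infty} h(k) = 0$, which is immediate since $p(k) \to \infty$ and each summand $1/(3k+j)^5$ tends to $0$. Second, $g$ is strictly increasing and $h$ is strictly decreasing on $\{k \in \mathbb{Z} : k \geq 2\}$. Third, the boundary values satisfy $g(2) < 0$ and $h(2) > 0$, which reduce to arithmetic with explicit rational numbers. Combining these, $g$ increases to $0$ from below and $h$ decreases to $0$ from above, so $g(k) < 0$ and $h(k) > 0$ for every $k \geq 2$.

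The particular form of $p(k) = 324 k^4 - 216 k^3 + 84 k^2 - 16 k - 1$ is dictated by asymptotic matching. The sum $\sum_{j=0}^{2} 1/(3k+j)^5$ has leading term $1/(81 k^5)$ for large $k$, while a telescoping difference $1/p(k) - 1/p(k+1)$ with $p(k) \sim 324 k^4$ has leading term $4 \cdot 324 k^3 / (324 k^4)^2 = 1/(81 k^5)$. The lower-order coefficients of $p$ and the shift constant $0.9$ are calibrated so that the desired inequalities hold already at the initial value $k=2$.

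The main obstacle is the monotonicity step. For $g$, one computes
\begin{equation*}
g(k+1) - g(k) = \sum_{j=0}^{2} \left[ \frac{1}{(3k+3+j)^5} - \frac{1}{(3k+j)^5} \right] + \frac{2}{p(k+1)} - \frac{1}{p(k)} - \frac{1}{p(k+2)},
\end{equation*}
and after clearing denominators this reduces to checking that a specific polynomial in $k$ of large but fixed degree is positive for all integers $k \geq 2$. The verification can be done either by expanding as a polynomial in $(k-2)$ and observing that all coefficients are non-negative, or by a coarser estimate using the leading coefficient together with a finite check near $k=2$. The argument for $h$ is completely analogous, with the constant $0.9$ entering only through lower-order terms that do not affect the sign of the leading behavior. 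These computations are tedious but mechanical; the genuine creative input is the choice of $p(k)$ and the shift constant, both of which are already fixed by the statement.
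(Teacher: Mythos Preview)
Your proposal is correct and follows essentially the same route as the paper: define $g$ and $h$ exactly as you do, verify the initial signs $g(2)<0$ and $h(2)>0$, check that $g$ is increasing and $h$ is decreasing, and use the common limit $0$ to conclude. The only cosmetic difference is that the paper treats $g$ and $h$ as functions of a real variable on $[2,\infty)$ rather than as integer sequences, and simply asserts the monotonicity rather than spelling out the polynomial-positivity reduction you describe.
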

\begin{proof}
Consider the function $g(x):=\frac{1}{(3x)^5}+\frac{1}{(3x+1)^5}+\frac{1}{(3x+2)^5}-\frac{1}{p(x)}+\frac{1}{p(x+1)}$ for $x \in [2,\infty).$ Then $g(x)$ is increasing on $[2,\infty)$ and $\displaystyle \lim_{x \rightarrow \infty} g(x)=0.$ Hence, it suffices to show that $g(2)<0.$ By a direct computation, we have
\begin{equation*}
g(2)=\frac{1}{6^5}+\frac{1}{7^5}+\frac{1}{8^5}-\frac{1}{3759}+\frac{1}{21119} \approx -0.00000006 <0.
\end{equation*}
For the other inequality, consider the function $h(x):=\frac{1}{(3x)^5}+\frac{1}{(3x+1)^5}+\frac{1}{(3x+2)^5}-\frac{1}{p(x)+0.9}+\frac{1}{p(x+1)+0.9}$ for $x \in [2,\infty).$ Then $h(x)$ is decreasing on $[2,\infty)$ and $\displaystyle \lim_{x \rightarrow \infty} h(x)=0.$ Hence, it suffices to show that $h(2)>0.$ By a direct computation, we have
\begin{equation*}
h(2)=\frac{1}{6^5}+\frac{1}{7^5}+\frac{1}{8^5}-\frac{1}{3759.9}+\frac{1}{21119.9} \approx 0.000000001 >0.
\end{equation*}
This completes the proof.
\end{proof}

Similarly, we have two more related results:

\begin{lemma}\label{lem 2}
For any positive integer $k,$ we put $q(k)=324k^4 +216 k^3 +84k^2 +16k -1.$ Then we have
\begin{equation*}
\frac{1}{q(k)+0.99}-\frac{1}{q(k+1)+0.99}< \frac{1}{(3k+1)^5}+\frac{1}{(3k+2)^5}+\frac{1}{(3k+3)^5}<\frac{1}{q(k)}-\frac{1}{q(k+1)}
\end{equation*}
for any $k \geq 1.$
\end{lemma}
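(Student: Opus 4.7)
The plan is to follow the template of the proof of Lemma \ref{lem 1} essentially verbatim. Set
\begin{equation*}
g(x) := \frac{1}{(3x+1)^5}+\frac{1}{(3x+2)^5}+\frac{1}{(3x+3)^5}-\frac{1}{q(x)}+\frac{1}{q(x+1)}
\end{equation*}
and let $h(x)$ denote the same expression with $q(x)$ and $q(x+1)$ replaced by $q(x)+0.99$ and $q(x+1)+0.99$ respectively. Since all six reciprocals tend to $0$, one has $\lim_{x\to\infty} g(x) = \lim_{x\to\infty} h(x) = 0$. Hence the lemma will follow once one establishes two things: (a) $g$ is increasing on $[1,\infty)$ and $h$ is decreasing on $[1,\infty)$; and (b) at the base point, $g(1) < 0$ and $h(1) > 0$.

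For (b), one computes $q(1) = 639$ and $q(2) = 7279$, so that
\begin{equation*}
g(1) = \frac{1}{4^5}+\frac{1}{5^5}+\frac{1}{6^5} - \frac{1}{639} + \frac{1}{7279} \quad \text{and} \quad h(1) = \frac{1}{4^5}+\frac{1}{5^5}+\frac{1}{6^5} - \frac{1}{639.99} + \frac{1}{7279.99}.
\end{equation*}
Direct numerical evaluation yields a small negative number for $g(1)$ and a small positive number for $h(1)$, both on the order of $10^{-8}$.

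The main obstacle is (a). Differentiating gives
\begin{equation*}
g'(x) = -15\left(\frac{1}{(3x+1)^6}+\frac{1}{(3x+2)^6}+\frac{1}{(3x+3)^6}\right) + \frac{q'(x)}{q(x)^2} - \frac{q'(x+1)}{q(x+1)^2},
\end{equation*}
together with an analogous expression for $h'(x)$ in which $q$ is shifted by $0.99$. One must show $g'(x) > 0$ and $h'(x) < 0$ on $[1,\infty)$. Clearing denominators turns each into a single polynomial inequality in $x$ of high but finite degree. The polynomial $q(x) = 324x^4 + 216x^3 + 84x^2 + 16x - 1$ was designed precisely so that the leading-order contributions cancel in the way needed; what remains can be bounded using crude estimates such as $(3x+i)^{-6} \leq (3x)^{-6}$ for $i=1,2,3$ together with the explicit expansions of $q(x)^{-2}$ and $q(x+1)^{-2}$, plus the observation that the shift by $0.99$ produces a perturbation that is small relative to the polynomial gap already present. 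This sign verification is mechanical but tedious, and is the only real calculational work in the argument.
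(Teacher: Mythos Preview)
Your proposal follows exactly the paper's approach: define $g$ and $h$, observe both tend to $0$, assert monotonicity on $[1,\infty)$, and check the sign at the base point $x=1$. The paper's proof is essentially identical (indeed it simply asserts the monotonicity without the sketch you give for part (a)); one minor slip is your numerical claim that both base values are ``on the order of $10^{-8}$'' --- in fact $g(1)\approx -2\times 10^{-6}$ while $h(1)\approx 10^{-9}$, though only the signs matter.
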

\begin{proof}
Consider the function $g(x):=\frac{1}{(3x+1)^5}+\frac{1}{(3x+2)^5}+\frac{1}{(3x+3)^5}-\frac{1}{q(x)}+\frac{1}{q(x+1)}$ for $x \in [1,\infty).$ Then $g(x)$ is increasing on $[1,\infty)$ and $\displaystyle \lim_{x \rightarrow \infty} g(x)=0.$ Hence, it suffices to show that $g(1)<0.$ By a direct computation, we have
\begin{equation*}
g(1)=\frac{1}{4^5}+\frac{1}{5^5}+\frac{1}{6^5}-\frac{1}{639}+\frac{1}{7279} \approx -0.000002 <0.
\end{equation*}
For the other inequality, consider the function $h(x):=\frac{1}{(3x+1)^5}+\frac{1}{(3x+2)^5}+\frac{1}{(3x+3)^5}-\frac{1}{q(x)+0.99}+\frac{1}{q(x+1)+0.99}$ for $x \in [1,\infty).$ Then $h(x)$ is decreasing on $[1,\infty)$ and $\displaystyle \lim_{x \rightarrow \infty} h(x)=0.$ Hence, it suffices to show that $h(1)>0.$ By a direct computation, we have
\begin{equation*}
h(1)=\frac{1}{4^5}+\frac{1}{5^5}+\frac{1}{6^5}-\frac{1}{639.99}+\frac{1}{7279.99} \approx 0.000000001 >0.
\end{equation*}
This completes the proof.
\end{proof}

\begin{lemma}\label{lem 3}
For any positive integer $k,$ we put $r(k)=324k^4 +648k^3 +516k^2 +192k +26.$ Then we have
\begin{equation*}
\frac{1}{r(k)+0.9}-\frac{1}{r(k+1)+0.9}< \frac{1}{(3k+2)^5}+\frac{1}{(3k+3)^5}+\frac{1}{(3k+4)^5}<\frac{1}{r(k)}-\frac{1}{r(k+1)}
\end{equation*}
for any $k \geq 1.$
\end{lemma}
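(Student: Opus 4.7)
The structure of the argument exactly mirrors that of Lemmas \ref{lem 1} and \ref{lem 2}. The plan is to introduce the two auxiliary functions
\begin{align*}
g(x) &:= \frac{1}{(3x+2)^5}+\frac{1}{(3x+3)^5}+\frac{1}{(3x+4)^5}-\frac{1}{r(x)}+\frac{1}{r(x+1)},\\
h(x) &:= \frac{1}{(3x+2)^5}+\frac{1}{(3x+3)^5}+\frac{1}{(3x+4)^5}-\frac{1}{r(x)+0.9}+\frac{1}{r(x+1)+0.9}
\end{align*}
on $[1,\infty)$, and to establish that $g$ is increasing with $\lim_{x\rightarrow\infty}g(x)=0$ and $g(1)<0$, while $h$ is decreasing with $\lim_{x\rightarrow\infty}h(x)=0$ and $h(1)>0$. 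Monotonicity together with the vanishing limits forces $g<0$ and $h>0$ on $[1,\infty)$, which are precisely the right-hand and left-hand inequalities of the lemma.

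The limits at infinity are immediate since each of the five summands tends to $0$. Monotonicity is verified by differentiating $g$ and $h$ and reducing the resulting sign conditions to polynomial inequalities in $x$ after clearing denominators; the polynomial $r(k)=324k^4+648k^3+516k^2+192k+26$ has been chosen precisely so that $\frac{1}{r(x)}-\frac{1}{r(x+1)}$ agrees with the sum of the three reciprocal fifth powers to leading order as $x\rightarrow\infty$, which is what allows $g'$ and $-h'$ to have a controlled sign on $[1,\infty)$. The base cases then reduce to the explicit computations
\begin{equation*}
g(1)=\frac{1}{5^5}+\frac{1}{6^5}+\frac{1}{7^5}-\frac{1}{1706}+\frac{1}{12842}<0
\end{equation*}
and
\begin{equation*}
h(1)=\frac{1}{5^5}+\frac{1}{6^5}+\frac{1}{7^5}-\frac{1}{1706.9}+\frac{1}{12842.9}>0,
\end{equation*}
using $r(1)=1706$ and $r(2)=12842$.

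The main obstacle, just as in Lemmas \ref{lem 1} and \ref{lem 2}, is that by design the lower and upper bounds sandwich the target quantity extraordinarily tightly: one expects $-g(1)$ and $h(1)$ to be of the order $10^{-8}$ or smaller, so sufficient numerical precision is needed to determine the sign of each base-case quantity rigorously. The monotonicity check is also computationally involved, since the resulting polynomial inequality is of high degree in $x$; however, once the cancellation of the leading $x^{-6}$ behavior (guaranteed by the choice of $r$) is used, the remaining inequality is easy to verify on $[1,\infty)$ by a routine estimate.
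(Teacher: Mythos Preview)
Your proposal is correct and follows essentially the same approach as the paper: define the same auxiliary functions $g$ and $h$, use monotonicity plus the vanishing limit to reduce to the base cases $g(1)<0$ and $h(1)>0$, and verify these numerically with $r(1)=1706$, $r(2)=12842$. You even give slightly more detail than the paper on how the monotonicity should be checked; the paper simply asserts it.
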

\begin{proof}
Consider the function $g(x):=\frac{1}{(3x+2)^5}+\frac{1}{(3x+3)^5}+\frac{1}{(3x+4)^5}-\frac{1}{r(x)}+\frac{1}{r(x+1)}$ for $x \in [1,\infty).$ Then $g(x)$ is increasing on $[1,\infty)$ and $\displaystyle \lim_{x \rightarrow \infty} g(x)=0.$ Hence, it suffices to show that $g(1)<0.$ By a direct computation, we have
\begin{equation*}
g(1)=\frac{1}{5^5}+\frac{1}{6^5}+\frac{1}{675}-\frac{1}{1706}+\frac{1}{12842} \approx -0.0000001 <0.
\end{equation*}
For the other inequality, consider the function $h(x):=\frac{1}{(3x+2)^5}+\frac{1}{(3x+3)^5}+\frac{1}{(3x+4)^5}-\frac{1}{r(x)+0.9}+\frac{1}{r(x+1)+0.9}$ for $x \in [1,\infty).$ Then $h(x)$ is decreasing on $[1,\infty)$ and $\displaystyle \lim_{x \rightarrow \infty} h(x)=0.$ Hence, it suffices to show that $h(1)>0.$ By a direct computation, we have
\begin{equation*}
h(1)=\frac{1}{5^5}+\frac{1}{6^5}+\frac{1}{7^5}-\frac{1}{1706.9}+\frac{1}{12842.9} \approx 0.0000002 >0.
\end{equation*}
This completes the proof.
\end{proof}

Now, we are ready to prove the formula. Let $p(k), q(k),$ and $r(k)$ be the polynomials as in the three lemmas given above.

\begin{theorem}\label{thm_5}
For each integer $n \geq 4,$ we put $\displaystyle f(n)=\left[ \left(\sum_{k=n}^{\infty}\frac{1}{k^5} \right)^{-1} \right]$. Then we have
\begin{equation*}
f(n)=\begin{cases} p(m), ~&\mbox{if}~n=3m; \\
q(m), ~&\mbox{if}~n=3m+1; \\
r(m), ~&\mbox{if}~n=3m+2.
\end{cases}
\end{equation*}
\end{theorem}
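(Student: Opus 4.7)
The plan is to derive Theorem \ref{thm_5} as a routine consequence of the three lemmas by telescoping each bracketed sum and applying the floor definition. The key observation is that each lemma already squeezes the block sum $\frac{1}{(3k+a)^5}+\frac{1}{(3k+a+1)^5}+\frac{1}{(3k+a+2)^5}$ between two telescoping differences, so summing over $k\ge m$ collapses everything to a single pair of reciprocals.

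I would split into the three cases $n=3m$, $n=3m+1$, $n=3m+2$, and treat them in parallel. For $n=3m$ with $m\ge 2$ (which corresponds to $n\ge 6$), I would apply Lemma \ref{lem 1} with $k$ ranging over $\{m,m+1,\dots\}$ and sum. The right-hand telescoping sum gives $\frac{1}{p(m)}$ in the limit, and the left-hand telescoping sum gives $\frac{1}{p(m)+0.9}$; the middle is exactly $\sum_{k=3m}^{\infty}\frac{1}{k^5}$. Thus
\begin{equation*}
\frac{1}{p(m)+0.9}<\sum_{k=3m}^{\infty}\frac{1}{k^5}<\frac{1}{p(m)},
\end{equation*}
which, after inverting, yields $p(m)<\bigl(\sum_{k=3m}^{\infty}\frac{1}{k^5}\bigr)^{-1}<p(m)+0.9<p(m)+1$. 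Since $p(m)$ is an integer (polynomial with integer coefficients evaluated at an integer), the floor is exactly $p(m)$, so $f(3m)=p(m)$. The two remaining cases $n=3m+1$ and $n=3m+2$ (with $m\ge 1$, covering $n=4,5$ and beyond) follow by identical telescoping arguments using Lemma \ref{lem 2} and Lemma \ref{lem 3} respectively; the only change is that the cushion term is $0.99$ in the second case and $0.9$ in the third, both still strictly less than $1$, so the floor conclusion is unchanged.

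The main obstacle, strictly speaking, has already been absorbed into the three lemmas: it is the delicate numerical verification that the chosen quartics $p,q,r$ and the constants $0.9$ or $0.99$ are tight enough to trap the three-term block sums between consecutive telescoping increments. Once that is granted, the theorem itself is essentially bookkeeping: checking the lower bounds on $m$ match the hypothesis $n\ge 4$, confirming that the telescoped series converge to $0$ at infinity (which follows from $p,q,r$ being polynomials of positive degree), and noting that a gap strictly less than $1$ between the bounds on the reciprocal sum forces its floor to equal the lower integer bound. I would conclude by remarking that this method should be contrasted with \cite{Xu (2016)}, since here the polynomials are chosen so that no residue-dependent correction is needed (consistent with $l=1$ in Question \ref{main ques} when $s=5$).
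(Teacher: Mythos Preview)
Your proposal is correct and follows essentially the same approach as the paper's own proof: telescoping the three lemmas over $k\ge m$, inverting the resulting double inequality, and using that $p(m),q(m),r(m)$ are integers together with the sub-unit cushion to pin down the floor. The only cosmetic difference is that the paper writes out all three cases in full rather than treating the last two by parallelism, but the argument is identical.
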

\begin{proof}
Suppose first that $n=3m$ for some $m \geq 2.$ By Lemma \ref{lem 1}, we know that
\begin{equation*}
\frac{1}{p(k)+0.9}-\frac{1}{p(k+1)+0.9}< \frac{1}{(3k)^5}+\frac{1}{(3k+1)^5}+\frac{1}{(3k+2)^5}<\frac{1}{p(k)}-\frac{1}{p(k+1)}
\end{equation*}
for each $k \geq m.$ Hence, by summing up, we get
\begin{equation*}
\sum_{k=m}^{\infty} \left( \frac{1}{p(k)+0.9}-\frac{1}{p(k+1)+0.9} \right) < \sum_{k=m}^{\infty} \left( \frac{1}{(3k)^5}+\frac{1}{(3k+1)^5}+\frac{1}{(3k+2)^5} \right) <
\end{equation*}
\begin{equation*}
\sum_{k=m}^{\infty} \left( \frac{1}{p(k)}-\frac{1}{p(k+1)} \right)
\end{equation*}
which, in turn, gives
\begin{equation*}
\frac{1}{p(m)+0.9}< \sum_{k=3m}^{\infty}\frac{1}{k^5}< \frac{1}{p(m)},
\end{equation*}
or equivalently,
\begin{equation*}
p(m)<\left(\sum_{k=3m}^{\infty}\frac{1}{k^5} \right)^{-1} < p(m)+0.9.
\end{equation*}
Now, since $p(k)$ is a polynomial with integer coefficients in $k$ so that $p(m)$ is an integer, it follows that
\begin{equation*}
f(3m)=\left[ \left(\sum_{k=3m}^{\infty}\frac{1}{k^5} \right)^{-1} \right] = p(m).
\end{equation*}

Now, suppose that $n=3m+1$ for some $m \geq 1.$ By Lemma \ref{lem 2}, we know that
\begin{equation*}
\frac{1}{q(k)+0.99}-\frac{1}{q(k+1)+0.99}< \frac{1}{(3k+1)^5}+\frac{1}{(3k+2)^5}+\frac{1}{(3k+3)^5}<\frac{1}{q(k)}-\frac{1}{q(k+1)}
\end{equation*}
for each $k \geq m.$ Hence, by summing up, we get
\begin{equation*}
\sum_{k=m}^{\infty} \left( \frac{1}{q(k)+0.99}-\frac{1}{q(k+1)+0.99} \right) < \sum_{k=m}^{\infty} \left( \frac{1}{(3k+1)^5}+\frac{1}{(3k+2)^5}+\frac{1}{(3k+3)^5} \right) <
\end{equation*}
\begin{equation*}
\sum_{k=m}^{\infty} \left( \frac{1}{q(k)}-\frac{1}{q(k+1)} \right)
\end{equation*}
which, in turn, gives
\begin{equation*}
\frac{1}{q(m)+0.99}< \sum_{k=3m+1}^{\infty}\frac{1}{k^5}< \frac{1}{q(m)},
\end{equation*}
or equivalently,
\begin{equation*}
q(m)<\left(\sum_{k=3m+1}^{\infty}\frac{1}{k^5} \right)^{-1} < q(m)+0.99.
\end{equation*}
Now, since $q(k)$ is a polynomial with integer coefficients in $k$ so that $q(m)$ is an integer, it follows that
\begin{equation*}
f(3m+1)=\left[ \left(\sum_{k=3m+1}^{\infty}\frac{1}{k^5} \right)^{-1} \right] = q(m).
\end{equation*}

Finally, suppose that $n=3m+2$ for some $m \geq 1.$ By Lemma \ref{lem 3}, we know that
\begin{equation*}
\frac{1}{r(k)+0.9}-\frac{1}{r(k+1)+0.9}< \frac{1}{(3k+2)^5}+\frac{1}{(3k+3)^5}+\frac{1}{(3k+4)^5}<\frac{1}{r(k)}-\frac{1}{r(k+1)}
\end{equation*}
for each $k \geq m.$ Hence, by summing up, we get
\begin{equation*}
\sum_{k=m}^{\infty} \left( \frac{1}{r(k)+0.9}-\frac{1}{r(k+1)+0.9} \right) < \sum_{k=m}^{\infty} \left( \frac{1}{(3k+2)^5}+\frac{1}{(3k+3)^5}+\frac{1}{(3k+4)^5} \right) <
\end{equation*}
\begin{equation*}
\sum_{k=m}^{\infty} \left( \frac{1}{r(k)}-\frac{1}{r(k+1)} \right)
\end{equation*}
which, in turn, gives
\begin{equation*}
\frac{1}{r(m)+0.9}< \sum_{k=3m+2}^{\infty}\frac{1}{k^5}< \frac{1}{r(m)},
\end{equation*}
or equivalently,
\begin{equation*}
r(m)<\left(\sum_{k=3m+2}^{\infty}\frac{1}{k^5} \right)^{-1} < r(m)+0.9.
\end{equation*}
Now, since $r(k)$ is a polynomial with integer coefficients in $k$ so that $r(m)$ is an integer, it follows that
\begin{equation*}
f(3m+2)=\left[ \left(\sum_{k=3m+2}^{\infty}\frac{1}{k^5} \right)^{-1} \right] = r(m).
\end{equation*}
This completes the proof of the theorem.
\end{proof}
We conclude this paper by mentioning that the idea of the proof of Theorem \ref{thm_5} was used to prove Theorem \ref{main thm}.

\end{document}